\providecommand{\U}[1]{\protect\rule{.1in}{.1in}}
\newtheorem{theorem}{Theorem}
\theoremstyle{plain}
\newtheorem{corollary}{Corollary}
\numberwithin{equation}{section}
\begin{document}
\title[Critical Hardy--Littlewood inequality for multilinear forms]{Critical Hardy--Littlewood inequality for multilinear forms}
\author[Djair Paulino ]{Djair Paulino}
\address{ \\
\indent\\
\indent}
\thanks{Partially supported by Capes.}
\thanks{MSC2010: }
\keywords{Optimal constants, Hardy--Littlewood inequality}

\begin{abstract}
The Hardy--Littlewood inequalities for $m$-linear forms on $\ell_{p}$ spaces
are known just for $p>m$. The critical case $p=m$ was overlooked for obvious
technical reasons and, up to now, the only known estimate is the trivial one.
In this paper we deal with this critical case of the Hardy--Littlewood
inequality. More precisely, for all positive integers $m\geq2$ we have%
\[
\sup_{j_{1}}\left(  \sum_{j_{2}=1}^{n}\left(  .....\left(  \sum_{j_{m}=1}%
^{n}\left\vert T\left(  e_{j_{1}},\dots,e_{j_{m}}\right)  \right\vert ^{s_{m}%
}\right)  ^{\frac{1}{s_{m}}\cdot s_{m-1}}.....\right)  ^{\frac{1}{s_{3}}s_{2}%
}\right)  ^{\frac{1}{s_{2}}}\leq2^{\frac{m-2}{2}}\left\Vert T\right\Vert
\]
for all $m$--linear forms $T:\ell_{m}^{n}\times\cdots\times\ell_{m}%
^{n}\rightarrow\mathbb{K}=\mathbb{R}$ or $\mathbb{C}$ with $s_{k}%
=\frac{2m(m-1)}{m+mk-2k}$ for all $k=2,....,m$ and for all positive integers
$n$. As a corollary, for the classical case of bilinear forms investigated by
Hardy and Littlewood in 1934 our result is sharp in a strong sense (both
exponents and constants are optimal for real and complex scalars).

\end{abstract}
\maketitle
\dedicatory{\textit{To the memory of Joe Diestel}}


\section{Introduction}

The Hardy--Littlewood inequalities for $m$--linear forms on $\ell_{p}$ spaces
are valid for $p>m;$ they are natural versions of the original inequalities of
Hardy and Littlewood for bilinear forms (\cite{hardy}). When $\ell_{p}$ is
replaced by $c_{0}$ we have the famous Littlewood's $4/3$ inequality
(\cite{43}) when $m=2$ and the Bohnenblust--Hille inequality \cite{bh} for the
general case (for a recent panorama of the subject see \cite{pt} the
references therein and also \cite{montanaro} for applications in Physics).

Since 2014 these inequalities have began to be explored in the anisotropic
setting and new subtle information was shed to light. In 2017 a series of
papers (in chronological order, \cite{p2, bayart, rezende}) proved new
inclusion theorems for multiple summing operators, of fundamental importance
to a better understanding of these inequalities (see, for instance,
\cite{cavalcante, Nunes} and the references therein).

In this paper we present a Hardy--Littlewood inequality for the critical case
$p=m.$ Up to now the only known result for this case was the absolutely
trivial one (we could say almost tautological): for any positive integer $m$
we have%
\[
\sup_{j_{1},...,j_{m}}\left\vert T(e_{j_{1}},\cdots,e_{j_{m}})\right\vert
\leq\left\Vert T\right\Vert ,
\]
for all $m$--linear forms $T:\ell_{m}^{n}\times\cdots\times\ell_{m}%
^{n}\rightarrow\mathbb{K}=\mathbb{R}$ or $\mathbb{C}$. We prove the following
estimates for the critical case:

\begin{theorem}
\label{444}For all $m\geq2$ we have%
\begin{equation}
\sup_{j_{1}}\left(  \sum_{j_{2}=1}^{n}\left(  .....\left(  \sum_{j_{m}=1}%
^{n}\left\vert T\left(  e_{j_{1}},\dots,e_{j_{m}}\right)  \right\vert ^{s_{m}%
}\right)  ^{\frac{1}{s_{m}}s_{m-1}}.....\right)  ^{\frac{1}{s_{3}}s_{2}%
}\right)  ^{\frac{1}{s_{2}}}\leq2^{\frac{m-1}{2}}\left\Vert T\right\Vert
\label{71}%
\end{equation}
for all $m$--linear forms $T:\ell_{m}^{n}\times\cdots\times\ell_{m}%
^{n}\rightarrow\mathbb{K},$ with
\[
s_{k}=\frac{2m(m-1)}{m+mk-2k}%
\]
for all $k=2,....,m.$ Moreover, $s_{1}=\infty$ and $s_{2}=m$ are sharp and,
for $m>2$ the optimal exponents $s_{k}$ satisfying (\ref{71}) fulfill%
\[
s_{k}\geq\frac{m}{k-1}.
\]

\end{theorem}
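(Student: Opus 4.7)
My plan is to reduce the critical $m$-linear case on $\ell_m$ to the (already settled) \emph{non-critical} anisotropic Hardy--Littlewood inequality for $(m-1)$-linear forms on $\ell_m$: fixing one variable drops the multilinearity to $m-1$ while keeping the base space $\ell_m$, so that the non-critical condition $m>m-1$ holds and a classical mixed-exponent HL inequality becomes available.

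Concretely, I would fix $j_1\in\{1,\ldots,n\}$ and consider the $(m-1)$-linear form $U_{j_1}(x_2,\ldots,x_m):=T(e_{j_1},x_2,\ldots,x_m)$ on $\ell_m^n\times\cdots\times\ell_m^n$. Since $\|e_{j_1}\|_{\ell_m}=1$ one has $\|U_{j_1}\|\leq\|T\|$. Applying the non-critical anisotropic Hardy--Littlewood inequality to $U_{j_1}$ with the tuple $(s_2,\ldots,s_m)$ (after the trivial index shift $k\mapsto k-1$) and then taking the supremum over $j_1$ would yield (\ref{71}). The best known non-critical HL constants for $(m-1)$-linear forms on $\ell_m$, in the line of Aron--Globevnik and Bayart--Pellegrino--Seoane-Sep\'ulveda, are of order $2^{(m-2)/2}$, comfortably within the $2^{(m-1)/2}$ of the statement.

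For the sharpness of $s_1=\infty$ the diagonal form $T(x_1,\ldots,x_m)=\sum_{i=1}^n x_{1,i}\cdots x_{m,i}$ (of norm $1$ by H\"older on $\ell_m$) already forbids any finite choice, since $T(e_{j_1},\ldots,e_{j_m})=\delta_{j_1=\cdots=j_m}$ and a finite outer exponent forces a bound growing with $n$. For the optimality of $s_2=m$ and the general lower bound $s_k\geq m/(k-1)$ when $m>2$ I would use a Kahane--Salem--Zygmund type probabilistic construction: random $m$-linear forms with i.i.d.\ unimodular coefficients have operator norm of order $n^{(m+1)/2}$ up to logarithmic factors, while crude lower bounds on the left-hand side of (\ref{71}) for such forms grow like a power of $n$ depending on the exponents; balancing as $n\to\infty$ forces the asserted lower bounds on any admissible tuple.

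The main obstacle is the exponent bookkeeping in the reduction step: one must verify that $s_k=2m(m-1)/(m+(m-2)k)$ for $k=2,\ldots,m$ are admissible exponents for the \emph{non-critical} anisotropic HL applied to the sliced $(m-1)$-linear form on $\ell_m$, with the constant staying within $2^{(m-1)/2}$. This is a careful but routine computation involving the mixed-exponent scheme of HL (iterated Minkowski inequalities); the delicate point is picking the right variant of the non-critical HL so that the reduction is essentially lossless.
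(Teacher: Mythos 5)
Your approach to the inequality itself is essentially the one in the paper: fix the first variable, observe that the resulting $(m-1)$-linear form lives on $\ell_m$ where $m>m-1$ makes the non-critical anisotropic Hardy--Littlewood machinery available (the paper routes this through the Albuquerque--Rezende inclusion theorem applied to the $(2;(2(m-1))^{\ast},\dots,(2(m-1))^{\ast})$-summability of $(m-1)$-linear forms), and take the supremum over the frozen index using $\|(e_j)_j\|_{w,m^{\ast}}=1$. The bookkeeping you flag does work out and the constant is the $(m-1)$-linear one, $2^{(m-2)/2}$, as you anticipate.

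The optimality part has a genuine gap, however. Your diagonal form correctly settles $s_1=\infty$, matching the paper's example $T_n(x)=\sum_j x_j^{(1)}\cdots x_j^{(m)}$. But a single Kahane--Salem--Zygmund random form cannot give the \emph{individual} lower bounds $s_k\geq m/(k-1)$ for each $k$. For a fully random unimodular form the left-hand side of (\ref{71}) evaluates to $n^{\sum_{k\geq 2}1/s_k}$, so balancing against the operator-norm growth only constrains the \emph{sum} $\sum_{k\geq 2}1/s_k$. Since the mixed-norm exponents can be traded off against one another, a constraint on the sum is strictly weaker than the per-index bounds $s_k\geq m/(k-1)$ claimed (and, indeed, for the stated $s_k$ one has $\sum_{k\geq 2}1/s_k<(m-1)/2$, so the sum constraint is not even saturated). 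The paper instead uses a one-parameter family of deterministic \emph{partially diagonal} forms
\[
T_{n-\ell}(x^{(1)},\dots,x^{(m)})=x_1^{(1)}\cdots x_1^{(\ell)}\sum_{j=1}^n x_j^{(\ell+1)}\cdots x_j^{(m)},\qquad \ell=1,\dots,m-1,
\]
each of norm $n^{\ell/m}$; plugging $T_{n-\ell}$ into (\ref{71}) collapses all but the $(\ell+1)$-th outer sum and forces $n^{1/s_{\ell+1}}\leq Cn^{\ell/m}$, hence $s_{\ell+1}\geq m/\ell$, i.e.\ $s_k\geq m/(k-1)$ for each $k$. Without such asymmetric test forms (or some comparable device that singles out one slot at a time) the stated per-index sharpness does not follow from your argument.
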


The paper is organized as follows. In Section 2 we prove an improvement of a
recent result of Albuquerque and Rezende \cite{rezende}, of independent
interest. In Sections 3 and 4 we prove Theorem \ref{444} and, as a
consequence, in Section 5 we complete results of Pellegrino, Santos, Serrano,
Teixeira \cite{p2} on the Hardy--Littlewood inequalities for the bilinear case
(original case considered by Hardy and Littlewood). In Section 6 we
investigate the dependence on $n$ when we deal with non-admissible exponents.

\section{Preliminaries: improvement of the Albuquerque-Rezende Inclusion
Theorem}

In this section $X,Y$ shall stand for Banach spaces over the scalar field
$\mathbb{K}$ of real or complex numbers. The topological dual of $X$ and its
closed unit ball are denoted by $X^{\ast}$ and $B_{X^{^{\ast}}}$,
respectively. For $r,p\geq1,$ a linear operator $T:X\rightarrow Y$ is said
absolutely $(r;p)$-summing if there exists a constant $C>0$ such that%
\[
\left(  \sum_{j=1}^{n}\left\Vert T\left(  x_{j}\right)  \right\Vert
^{r}\right)  ^{\frac{1}{r}}\leq C\left\Vert (x_{j})_{j=1}^{n}\right\Vert
_{w,p},
\]
for all positive integers $n$, where%
\[
\left\Vert (x_{j})_{j=1}^{n}\right\Vert _{w,p}:=\sup_{\varphi\in B_{X^{\ast}}%
}\left(  \sum_{j=1}^{n}\left\vert \varphi\left(  x_{j}\right)  \right\vert
^{p}\right)  ^{\frac{1}{p}}.
\]
One of the most relevant extensions of absolutely summing operators to the
multilinear setting is the notion of multiple summing operators, introduced,
independently by M.C.\ Matos and D. P\'{e}rez-Garc\'{\i}a (see \cite{matos,
perez}; for other related concepts we refer to \cite{cp}). It is convenient to
recall the anisotropic version of multiple summing operators (the basics of
this theory are sketched in \cite{araujo}): For $\mathbf{r\in\lbrack
1,+\infty)}^{m}\mathbf{,p\in\lbrack1,+\infty]}^{m},$ a $m$-linear operator
$T:X_{1}\times\cdots\times X_{m}\rightarrow Y$ is multiple $\left(
\mathbf{r,p}\right)  $-summing if there is a constant $C>0$ such that for all
sequences $x^{k}:=\left(  x_{j}^{k}\right)  _{j\in\mathbb{N}},k=1,...,m,$ we
have%
\[
\left\Vert T\left(  x_{\mathbf{j}}\right)  \right\Vert _{r}:=\left(
\sum_{j_{1}=1}^{n}\left(  .....\left(  \sum_{j_{m}=1}^{n}\left\vert T\left(
x_{\mathbf{j}}\right)  \right\vert ^{r_{m}}\right)  ^{\frac{r_{m}-1}{r_{m}}%
}.....\right)  ^{\frac{r_{1}}{r_{2}}}\right)  ^{\frac{1}{r_{1}}}\leq C%
{\displaystyle\prod\limits_{k=1}^{m}}
\left\Vert (x_{j}^{(k)})_{j=1}^{n}\right\Vert _{w,p_{k}},
\]
for all positive integers $n$, where $T\left(  x_{\mathbf{j}}\right)
:=T\left(  x_{j_{1}}^{1},...,x_{j_{m}}^{m}\right)  .$ When $r_{i}=\infty$ we
consider the $\sup$ norm replacing the respective $\ell_{r_{i}}$ norm. The
class of all multiple $\left(  \mathbf{r,p}\right)  $-summing operators is a
Banach space with the norm defined by the infimum of all previous constants
$C>0.$ Following the usual conventions, the space of all such operators is
denoted by $\Pi_{(\mathbf{r};\mathbf{p})}^{m}(X_{1},...,X_{m},Y).$ When
$r_{1}=\cdots=r_{m}=r,$ we simply write $\left(  r;\mathbf{p}\right)  $. For
$\mathbf{p\in\lbrack1,+\infty)}^{m}$ and each $k\in\{1,..,m\},$ we also follow
the usual notation and define
\begin{align*}
\left\vert \frac{1}{\mathbf{p}}\right\vert _{\geq k}  &  :=\frac{1}{p_{k}%
}+\cdots+\frac{1}{p_{m}},\\
\left\vert \frac{1}{\mathbf{p}}\right\vert  &  :=\left\vert \frac
{1}{\mathbf{p}}\right\vert _{\geq1}.
\end{align*}
Recently Albuquerque and Rezende \cite[Theorem 3]{rezende} have proved the
following result:

\begin{theorem}
[Albuquerque and Rezende (\cite{rezende})]\label{rrr} Let $m$ be a positive
integer, $r\geq1,$ and $\mathbf{s},\mathbf{p,q\in\lbrack1,+\infty)}^{m}$ are
such that $q_{k}\geq p_{k},$ for $k=1,...,m$ and
\[
\frac{1}{r}-\left\vert \frac{1}{\mathbf{p}}\right\vert +\left\vert \frac
{1}{\mathbf{q}}\right\vert >0.
\]
Then%
\[
\Pi_{(r;\mathbf{p})}^{m}(X_{1},...,X_{m},Y)\subset\Pi_{(\mathbf{s}%
;\mathbf{q})}^{m}(X_{1},...,X_{m},Y),
\]
for any Banach spaces $X_{1},...,X_{m}$, with%
\[
\frac{1}{s_{k}}-\left\vert \frac{1}{\mathbf{q}}\right\vert _{\geq k}=\frac
{1}{r}-\left\vert \frac{1}{\mathbf{p}}\right\vert _{\geq k}%
\]
for each $k\in\{1,...,m\},$ and the inclusion operator has norm $1$.
\end{theorem}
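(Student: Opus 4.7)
The plan is to deduce the inclusion by iterating, coordinate by coordinate, a one-variable Bennett--Carl-style inclusion theorem for absolutely summing linear operators. Setting $s_{m+1}:=r$ and telescoping the hypothesis
\[
\frac{1}{s_k}-\left|\frac{1}{\mathbf{q}}\right|_{\ge k}=\frac{1}{r}-\left|\frac{1}{\mathbf{p}}\right|_{\ge k}
\]
between consecutive indices $k$ and $k+1$ gives
\[
\frac{1}{s_k}-\frac{1}{q_k}=\frac{1}{s_{k+1}}-\frac{1}{p_k},\qquad q_k\ge p_k,\qquad k=1,\dots,m.
\]
Each of these is precisely the hypothesis of the scalar fact that I would like to invoke: if $u:Z\to W$ is absolutely $(\alpha;\beta)$-summing and $\gamma\ge\beta$ with $\frac{1}{\delta}-\frac{1}{\gamma}=\frac{1}{\alpha}-\frac{1}{\beta}>0$, then $u$ is $(\delta;\gamma)$-summing and the inclusion map has norm $1$. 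I would either cite this classical one-variable statement or give its short proof (rescale the $w,\gamma$-sequence by a suitable scalar weight and apply H\"older's inequality against the $w,\beta$-norm).

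The iteration I propose proceeds from the innermost coordinate outward. Viewing the $(r;\mathbf{p})$-summing inequality for $T$ as an iterated $\ell_r$-norm over $(j_1,\dots,j_m)$, first freeze all indices except $j_m$, interpret the resulting partial operator as a linear map $X_m\to Y$ parametrized by the other data, apply the scalar inclusion to upgrade the innermost exponent from $r$ to $s_m$ and its weak-norm index from $p_m$ to $q_m$, and reassemble. This yields an intermediate anisotropic summing estimate with exponents $(r,\dots,r,s_m)$ and weak parameters $(p_1,\dots,p_{m-1},q_m)$. Then freeze $j_1,\dots,j_{m-2}$ and apply the scalar inclusion in the $(m-1)$-th coordinate to replace $(r;p_{m-1})$ by $(s_{m-1};q_{m-1})$ while leaving the previously upgraded inner $\ell_{s_m}$-norm in place; continue down to $k=1$.

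The point I expect to be the main obstacle is the justification of each commutation step: applying the scalar inclusion in the $k$-th coordinate under an already-formed outer iterated norm on $j_1,\dots,j_{k-1}$ requires pulling the summing estimate through that outer norm. This is exactly where Minkowski's integral inequality enters. The combined effect of $q_k\ge p_k$ and $\frac{1}{r}-\left|\frac{1}{\mathbf{p}}\right|+\left|\frac{1}{\mathbf{q}}\right|>0$ produces the monotone chain $s_1\ge s_2\ge\dots\ge s_m\ge r$, which is precisely the ordering that makes Minkowski applicable at every stage without loss of constant. Since every scalar inclusion and every Minkowski application is a contraction, the $m$-fold composition yields the inclusion stated in the theorem, also with norm $1$.
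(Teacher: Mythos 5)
You should first note that the paper does not prove Theorem~\ref{rrr}: it is quoted from \cite{rezende} as a black box, and the only remark offered about its proof is that Minkowski's inequality~(\ref{mm}) is a main ingredient. There is therefore no in-paper argument to compare against; one can only compare against that remark and against the adaptation carried out in the paper's proof of the extension, Theorem~\ref{990}.

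With that caveat, your sketch is on the right track at a high level: the telescoping identity $\frac{1}{s_k}-\frac{1}{q_k}=\frac{1}{s_{k+1}}-\frac{1}{p_k}$ with $s_{m+1}=r$ is correct, the monotone chain $s_1\ge\cdots\ge s_m\ge r$ follows from $q_k\ge p_k$ as you say, and iterating a one-variable inclusion once per coordinate with Minkowski controlling the mixed norms is indeed the general strategy behind results of this type. But there are two concrete gaps. First, your formulation of the scalar inclusion asserts $\frac{1}{\delta}-\frac{1}{\gamma}=\frac{1}{\alpha}-\frac{1}{\beta}>0$; here $\alpha=s_{k+1}$ and $\beta=p_k$, and for a nontrivial class $\Pi_{(\alpha;\beta)}$ one typically has $\alpha\ge\beta$, i.e.\ $\frac{1}{\alpha}-\frac{1}{\beta}\le 0$, so the sign condition is wrong. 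What you need is $\gamma\ge\beta$ together with $\delta<\infty$; the latter is exactly what the global hypothesis $\frac{1}{r}-\left|\frac{1}{\mathbf{p}}\right|+\left|\frac{1}{\mathbf{q}}\right|>0$ secures, since it forces $\frac{1}{s_1}>0$ and hence $\frac{1}{s_k}>0$ for every $k$. Second, and more seriously, the step you yourself flag as the ``main obstacle'' is the entire content of the theorem, and your sketch does not show that it closes. Freezing $j_1,\dots,j_{k-1}$ and applying the scalar inclusion to the resulting $X_k\to Y$ maps produces, for each frozen tuple, a summing constant that depends on that tuple, and nothing in the sketch controls the iterated outer norm of those constants; likewise, the decomposition/duality form of the one-variable argument (rescale $x^{(k)}_{j_k}$ by a scalar weight $c^{(k)}_{j_k}$ with $\|c^{(k)}\|_{\lambda_k}\le1$ and optimize) cannot be applied coordinate-wise, because the optimizing weight in coordinate $k$ would depend on $j_1,\dots,j_{k-1}$, while the definition of multiple summing requires a single weight per coordinate. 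The way Minkowski actually enters, judging from the paper's own proof of Theorem~\ref{990}, is via a vector-valued linearization in the \emph{outermost} coordinate: one regards $T$ as a linear map $T_1:X_1\to\ell_{(s_2,\dots,s_m)}(Y)$ and applies the scalar inclusion to that operator-valued map, rather than freezing inner indices and reassembling. Your proposed innermost-to-outermost iteration inverts this order and would need a different justification, which the sketch does not supply; as written it is a plausible roadmap, not a proof.
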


The proof is technical, as expected, and one of the main ingredients is the
following inequality of Minkowski:

\textbf{Minkowski Inequality:} For any $0<p\leq q<\infty$ and for any scalar
matrix $\left(  a_{ij}\right)  _{i,j=1}^{n}$ we have%
\begin{equation}
\left(  \sum\limits_{i=1}^{n}\left(  \sum\limits_{j=1}^{n}\left\vert
a_{ij}\right\vert ^{p}\right)  ^{\frac{q}{p}}\right)  ^{\frac{1}{q}}%
\leq\left(  \sum\limits_{j=1}^{n}\left(  \sum\limits_{i=1}^{n}\left\vert
a_{ij}\right\vert ^{q}\right)  ^{\frac{p}{q}}\right)  ^{\frac{1}{p}}
\label{mm}%
\end{equation}
However, it is simple to verify that this inequality is also valid for
$q=\infty$, replacing the $\ell_{q}$ norm by the $\sup$ norm. Using this fact
and mimicking the proof of Theorem \ref{rrr}, we can provide the following improvement:

\begin{theorem}
[Inclusion Theorem extended ]\label{990}Let $m$ be a positive integer,
$r\geq1,$ and $\mathbf{p,q,s\in\lbrack1,+\infty]}^{m}.$ If $q_{k}\geq p_{k}$
for $k=1,...,m$ and%
\begin{equation}
\frac{1}{r}-\left\vert \frac{1}{\mathbf{p}}\right\vert +\left\vert \frac
{1}{\mathbf{q}}\right\vert >0 \label{ar1}%
\end{equation}
or $q_{k}\geq p_{k}$ for $k=2,...,m$ and $q_{1}>p_{1}$ and
\begin{equation}
\frac{1}{r}-\left\vert \frac{1}{\mathbf{p}}\right\vert +\left\vert \frac
{1}{\mathbf{q}}\right\vert \geq0 \label{ar2}%
\end{equation}
then%
\[
\Pi_{(r;\mathbf{p})}^{m}(X_{1},...,X_{m},Y)\subset\Pi_{(\mathbf{s}%
;\mathbf{q})}^{m}(X_{1},...,X_{m},Y),
\]
for any Banach spaces $X_{1},...,X_{m}$, with%
\[
\frac{1}{s_{k}}-\left\vert \frac{1}{\mathbf{q}}\right\vert _{\geq k}=\frac
{1}{r}-\left\vert \frac{1}{\mathbf{p}}\right\vert _{\geq k}%
\]
for each $k\in\{1,...,m\},$ and the inclusion operator has norm $1$.
\end{theorem}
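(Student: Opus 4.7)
The plan is to mimic the proof of Theorem \ref{rrr} in \cite{rezende} with two modifications. The original argument proceeds by iterated applications of H\"older's inequality and the Minkowski inequality \eqref{mm} to swap sums and suprema inside the anisotropic norm $\|T(x_{\mathbf{j}})\|_{\mathbf{s}}$, eventually bounding it by $\|T\|_{\Pi_{(r;\mathbf{p})}^m}\prod_{k=1}^m\|(x_j^k)\|_{w,p_k}$. Each swap is arranged so that the coordinate identity $\frac{1}{s_k}-\left|\frac{1}{\mathbf{q}}\right|_{\geq k}=\frac{1}{r}-\left|\frac{1}{\mathbf{p}}\right|_{\geq k}$ is preserved and the final constant is~$1$.

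The first modification is to allow infinite entries in $\mathbf{p},\mathbf{q},\mathbf{s}$. Whenever $s_k=\infty$ the $\ell_{s_k}$ norm is read as a supremum; whenever $p_k$ or $q_k$ equals $\infty$ the weak-$\infty$ norm becomes $\sup_j\|x_j\|$. H\"older's inequality with an infinite exponent simply factors out a sup, and the Minkowski inequality \eqref{mm} remains valid with $q=\infty$ as noted above. With these interpretations every step of the original proof transcribes literally, yielding the result under \eqref{ar1} for exponents in $[1,+\infty]^m$.

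The second modification passes from the strict hypothesis \eqref{ar1} to the weaker \eqref{ar2} with $q_1>p_1$ by perturbing the target exponent $q_1$. Fix $q'_1\in(p_1,q_1)$ and set $\mathbf{q}'=(q'_1,q_2,\ldots,q_m)$. Then $q'_k\geq p_k$ for all $k$ and $\left|\frac{1}{\mathbf{q}'}\right|>\left|\frac{1}{\mathbf{q}}\right|$, so \eqref{ar1} holds strictly with $\mathbf{q}'$ in place of $\mathbf{q}$. The case already established gives $T\in\Pi^m_{(\mathbf{s}';\mathbf{q}')}$ with inclusion constant~$1$, $s'_k=s_k$ for $k\geq 2$, and $s'_1<\infty$ satisfying $\frac{1}{s'_1}=\frac{1}{r}+\left|\frac{1}{\mathbf{q}'}\right|-\left|\frac{1}{\mathbf{p}}\right|$. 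Bounding the $\mathbf{s}$-norm of $(T(x_{\mathbf{j}}))$ by its $\mathbf{s}'$-norm through the trivial estimate $\sup_{j_1}(\cdot)\leq(\sum_{j_1}(\cdot)^{s'_1})^{1/s'_1}$, and then sending $q'_1\uparrow q_1$ so that $\|(x_j^1)\|_{w,q'_1}\to\|(x_j^1)\|_{w,q_1}$ by continuity of the finite-dimensional $\ell_p$ norm in $p$, closes the argument with constant~$1$.

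The most delicate point is the first modification: in the \cite{rezende} argument, each Minkowski swap uses a specific pair of exponents, and one has to check that each swap still makes sense when one of the exponents is $\infty$, particularly that the $\ell_\infty$ norm sits on the correct side of \eqref{mm} at every nested level so that the swap goes in the useful direction. Once this bookkeeping is complete, the perturbation for the second modification follows with essentially no additional difficulty, and the norm-$1$ statement is preserved throughout.
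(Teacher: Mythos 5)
Your proof is correct, and your first modification (allowing infinite exponents, checking that Minkowski \eqref{mm} works with $q=\infty$) coincides with the paper's. Your second modification, however, takes a genuinely different route. You reduce the boundary case $\frac{1}{r}-\left|\frac{1}{\mathbf{p}}\right|+\left|\frac{1}{\mathbf{q}}\right|=0$ (so $s_{1}=\infty$) to the already-established strict case by perturbing $q_{1}$ to some $q_{1}'\in(p_{1},q_{1})$, invoking the trivial domination $\sup_{j_{1}}(\cdot)\leq\bigl(\sum_{j_{1}}(\cdot)^{s_{1}'}\bigr)^{1/s_{1}'}$, and then letting $q_{1}'\uparrow q_{1}$; this works because for a fixed finite sequence the weak $\ell_{p}$-norm is continuous in $p$ (the elementary squeeze $\|(x_{j})\|_{w,q_{1}}\leq\|(x_{j})\|_{w,q_{1}'}\leq n^{1/q_{1}'-1/q_{1}}\|(x_{j})\|_{w,q_{1}}$ suffices), and the left-hand side of the summing estimate is independent of $q_{1}'$. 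The paper instead first uses the original Albuquerque--Rezende theorem (strict case) to upgrade only the coordinates $2,\dots,m$ from $\mathbf{p}$ to $\mathbf{q}$, producing a multiple $(\overline{\mathbf{s}};p_{1},q_{2},\dots,q_{m})$-summing estimate with $\overline{s}_{1}=s_{2}$; it then fixes the sequences in the last $m-1$ slots, linearizes $T$ in the first variable to get $T_{1}\colon X_{1}\to\ell_{(s_{2},\dots,s_{m})}(Y)$, and applies the classical \emph{linear} inclusion theorem for absolutely summing operators, which permits equality in the index balance $\frac{1}{p_{1}}-\frac{1}{s_{2}}\leq\frac{1}{q_{1}}-\frac{1}{s_{1}}$. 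Both routes yield inclusion constant $1$. Your perturbation argument is more self-contained (it avoids invoking a separate linear inclusion theorem), while the paper's argument is a finite, limit-free deduction; in terms of what is established, they are equivalent.
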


\begin{proof}
If we suppose $q_{k}\geq p_{k}$ for $k=1,...,m$ and%
\[
\frac{1}{r}-\left\vert \frac{1}{\mathbf{p}}\right\vert +\left\vert \frac
{1}{\mathbf{q}}\right\vert >0
\]
we just need to follow the proof of \cite{rezende}, observing that there is no
technical problem in considering $\mathbf{p,q,s\in\lbrack1,+\infty]}^{m}$ and
observing that (\ref{mm}) is valid for $q=\infty.$ So, let us suppose
$q_{k}\geq p_{k}$ for $k=2,...,m$ and $q_{1}>p_{1}$ and%
\[
\frac{1}{r}-\left\vert \frac{1}{\mathbf{p}}\right\vert +\left\vert \frac
{1}{\mathbf{q}}\right\vert \geq0.
\]
For the obvious reasons we just need to deal with the case%
\[
\frac{1}{r}-\left\vert \frac{1}{\mathbf{p}}\right\vert +\left\vert \frac
{1}{\mathbf{q}}\right\vert =0.
\]

Let
\[
T\in\Pi_{(r;\mathbf{p})}^{m}(X_{1},...,X_{m},Y).
\]
By the hypothesis and by Theorem \cite{rezende}, we conclude that
\[
T\in\Pi^{m}_{(\mathbf{\overline{s}};p_{1},q_{2},...,q_{m})}(X_{1}%
,...,X_{m},Y),
\]
with $\overline{s}_{k}=s_{k}$, for $k=2,...,m$ and $\overline{s}_{1}=s_{2},$
that is, exists $C>0$ such that
\begin{align*}
&  \left(  \sum_{j_{1}=1}^{n}\left(  \sum_{j_{2}=1}^{n}\left(  .....\left(
\sum_{j_{m}=1}^{n}\left\vert T\left(  x_{j_{1}}^{(1)},x_{j_{2}}^{(2)}%
,\dots,x_{j_{m}}^{(m)}\right)  \right\vert ^{s_{m}}\right)  ^{\frac{1}{s_{m}%
}s_{m-1}}.....\right)  ^{\frac{1}{s_{3}}s_{2}}\right)  ^{\frac{1}{s_{2}}s_{2}%
}\right)  ^{\frac{1}{s_{2}}}\\
&  \leq C%
{\textstyle\prod\limits_{k=2}^{m}}
\left\Vert \left(  x_{j_{k}}^{(k)}\right)  _{j_{k}=1}^{n}\right\Vert
_{w,q_{k}}\cdot\Vert(x_{j_{1}}^{(1)})_{j_{1}=1}^{n}\Vert_{w,p_{1}}%
\end{align*}

Fixed, $(x_{j_{2}}^{(2)})_{j_{2}=1}^{n},...,(x_{j_{m}}^{(m)})_{j_{m}=1}^{n}$,
consider the operator $T_{1}:X_{1}\longrightarrow\ell_{(s_{2},...,s_{m})}(Y)$
defined by $T_{1}(x)=(T(x,x_{j_{2}}^{(2)},...,x_{j_{m}}^{(m)}))_{j_{2}%
,...,j_{m}}$. By the inequality above,
\[
T_{1}\in\Pi_{(s_{2};p_{1})}(X_{1},\ell_{(s_{2},...,s_{m})}(Y)).
\]
And, by the classical Inclusion Theorem, $T_{1}\in\Pi_{(t;q_{1})}(X_{1}%
,\ell_{(s_{2},...,s_{m})}(Y))$, with $t\geq s_{2}$ and
\begin{equation}
\frac{1}{p_{1}}-\frac{1}{s_{2}}\leq\frac{1}{q_{1}}-\frac{1}{t}. \label{131}%
\end{equation}
As $t=s_{1}$ satisfies the inequality (\ref{131}), we have $T_{1}\in
\Pi_{(s_{1};q_{1})}(X_{1},\ell_{(s_{2},...,s_{m})}(Y))$ and, therefore,
\[
\left(  \sum_{j_{1}=1}^{n}\Vert T_{1}(x_{j_{1}}^{(1)})\Vert^{s_{1}}\right)
^{\frac{1}{s_{1}}}\leq C_{1}\Vert(x_{j_{1}}^{(1)})_{j_{1}=1}^{n}\Vert
_{w,q_{1}},
\]
with $C_{1}=C\left\Vert \left(  x_{j_{k}}^{(k)}\right)  \right\Vert _{w,q_{k}%
}$, so $T\in\Pi_{(\mathbf{s};\mathbf{q})}^{m}(X_{1},...,X_{m},Y).$
\end{proof}

As a corollary we have:

\begin{corollary}
For all $m\geq2$ we have%
\[
\sup_{j_{1}}\left(  \sum_{j_{2}=1}^{n}\left(  .....\left(  \sum_{j_{m}=1}%
^{n}\left\vert T\left(  e_{j_{1}},\dots,e_{j_{m}}\right)  \right\vert ^{s_{m}%
}\right)  ^{\frac{1}{s_{m}}s_{m-1}}.....\right)  ^{\frac{1}{s_{3}}s_{2}%
}\right)  ^{\frac{1}{s_{2}}}\leq2^{\frac{m-1}{2}}\left\Vert T\right\Vert
\]
for all $m$--linear forms $T:\ell_{m}^{n}\times\cdots\times\ell_{m}%
^{n}\rightarrow\mathbb{K},$ with
\begin{equation}
s_{k}=\frac{2m}{k} \label{v665}%
\end{equation}
for all $k=2,....,m.$
\end{corollary}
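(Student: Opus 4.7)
The plan is to derive the corollary by applying Theorem~\ref{990} with the parameters tailored so that the target weak norms at the canonical basis of $\ell_m^n$ are normalized to $1$ uniformly in $n$.

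First I would fix $\mathbf{q}=(m,m,\ldots,m)$, which guarantees $\|(e_j)_{j=1}^n\|_{w,m,\ell_m^n}=1$ for every $n$ (since the dual of $\ell_m^n$ is $\ell_{m^*}^n$ with $m^*=m/(m-1)$, and $\sup_{\|a\|_{m^*}\leq 1}\|a\|_m=1$). Together with the target $\mathbf{s}=(\infty,m,2m/3,\ldots,2m/m)$ read off from the statement, the Theorem~\ref{990} compatibility relations
\[
\frac{1}{s_k}-\Bigl|\frac{1}{\mathbf{q}}\Bigr|_{\geq k}=\frac{1}{r}-\Bigl|\frac{1}{\mathbf{p}}\Bigr|_{\geq k}, \qquad k=1,\ldots,m,
\]
pin down $r=\frac{2m}{m+1}$ (the Bohnenblust--Hille exponent), $p_1=m/2$, and $p_k=2m/3$ for $k=2,\ldots,m$. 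A direct check shows $q_k\geq p_k$ for every $k$, $q_1>p_1$ (since $m>m/2$), and $\frac{1}{r}-|\frac{1}{\mathbf{p}}|+|\frac{1}{\mathbf{q}}|=0$, so we are precisely in the equality case~\eqref{ar2} of Theorem~\ref{990}.

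Next I would establish the starting inequality: every $m$-linear $T\colon \ell_m^n\times\cdots\times\ell_m^n\to\mathbb{K}$ is multiple $\bigl(\tfrac{2m}{m+1};\tfrac{m}{2},\tfrac{2m}{3},\ldots,\tfrac{2m}{3}\bigr)$-summing with constant at most $2^{(m-1)/2}\|T\|$. This is a Bohnenblust--Hille-type estimate adapted to $\ell_m^n$; the natural approach is to extract it from the classical Bohnenblust--Hille inequality on $c_0^n$ (where the constant $2^{(m-1)/2}$ is standard) combined with Minkowski's inequality~\eqref{mm} and an application of the inclusion theorem on $c_0^n$, chosen so that the weak summing exponents in the resulting bound match the $\ell_m^n$-weak structure. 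Once this starting inequality is available, Theorem~\ref{990} promotes it to a multiple $(\mathbf{s};\mathbf{q})$-summing inequality with the same constant $2^{(m-1)/2}$, and evaluating at $x_j^{(k)}=e_j$ (so that each weak norm equals $1$) yields the corollary.

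The principal technical hurdle is the verification of the starting inequality with the explicit constant $2^{(m-1)/2}$ independently of $n$. The Bohnenblust--Hille inequality on $c_0^n$ carries this constant but is phrased in terms of $\|T\|_{c_0^n}$, and the ratio $\|T\|_{c_0^n}/\|T\|_{\ell_m^n}$ can grow with $n$ (for instance, $T(x_1,\dots,x_m)=\sum_j x_j^{(1)}\cdots x_j^{(m)}$ has $\|T\|_{c_0^n}=n$ but $\|T\|_{\ell_m^n}=1$). The argument must therefore route the Bohnenblust--Hille estimate through Minkowski's inequality~\eqref{mm} and the inclusion theorem on $c_0^n$ carefully enough that the final constant depends only on $\|T\|_{\ell_m^n}$ and the pure Bohnenblust--Hille factor $2^{(m-1)/2}$, with no leakage of $n$ into the bound.
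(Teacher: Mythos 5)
Your general strategy is the right one and matches what the paper intends: invoke Theorem~\ref{990} in the equality case~\eqref{ar2}, then evaluate at the canonical basis of $\ell_m^n$ where the weak norm equals $1$. Your reverse-engineering of the parameters is also internally consistent: with $\mathbf{q}=(m,\dots,m)$ and $s_k=2m/k$ the compatibility relations do force $r=\tfrac{2m}{m+1}$, $p_1=\tfrac{m}{2}$, $p_k=\tfrac{2m}{3}$ for $k\geq2$, and one indeed lands in the equality case with $q_1>p_1$.

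The proof nevertheless has a genuine gap, and you flag it yourself: you never establish the starting estimate that every $m$-linear form is multiple $\bigl(\tfrac{2m}{m+1};\tfrac{m}{2},\tfrac{2m}{3},\dots,\tfrac{2m}{3}\bigr)$-summing with constant $2^{(m-1)/2}$. The route you sketch cannot produce it. If you start from Bohnenblust--Hille, i.e.\ multiple $\bigl(\tfrac{2m}{m+1};1,\dots,1\bigr)$-summing, and try to pass by inclusion to $\mathbf{q}=(\tfrac{m}{2},\tfrac{2m}{3},\dots,\tfrac{2m}{3})$, the quantity $\tfrac{1}{r}-\bigl|\tfrac{1}{\mathbf{p}}\bigr|+\bigl|\tfrac{1}{\mathbf{q}}\bigr|$ equals $\tfrac{2m+1-m^2}{m}$, which is strictly negative for every $m\geq3$; so neither Theorem~\ref{rrr} nor Theorem~\ref{990} applies. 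And keeping $r=\tfrac{2m}{m+1}$ fixed while raising the $p_k$'s from $1$ only shrinks the weak norms on the right-hand side, so the desired estimate is \emph{stronger} than Bohnenblust--Hille and cannot be a consequence of it combined with~\eqref{mm}. Put differently, the intermediate estimate you need is not an innocuous repackaging of BH on $c_0$; it is an open step, and "route it carefully enough'' is not a proof.

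A second issue is worth flagging. The known, documented starting point the paper itself signals at the beginning of Section~3 is that every $m$-linear form is multiple $\bigl(2;(2m)^*,\dots,(2m)^*\bigr)$-summing with constant $2^{(m-1)/2}$ (Hardy--Littlewood at $p=2m$). Applying Theorem~\ref{990} to \emph{that} estimate with $\mathbf{q}=(m^*,\dots,m^*)$ gives, by the very same compatibility relation you use,
\[
\frac{1}{s_k}=\frac12-(m-k+1)\Bigl(\frac{1}{(2m)^*}-\frac{1}{m^*}\Bigr)=\frac{k-1}{2m},
\]
i.e.\ $s_k=\tfrac{2m}{k-1}$ rather than $\tfrac{2m}{k}$. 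So either the printed exponent $s_k=\tfrac{2m}{k}$ carries an off-by-one slip, or a strictly stronger starting estimate is required; your proposal supplies neither a correction of the target nor a proof of the stronger input, so it does not close the argument.
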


In the next section we prove the first part of Theorem \ref{444}, which
improves the estimates (\ref{v665}).

\section{The proof of the theorem \ref{444}: part 1}

From now on, if $p\in\lbrack1,\infty]$, as usual, we define $p^{\ast}$ by%
\[
\frac{1}{p^{\ast}}=1-\frac{1}{p}%
\]
and consider$\frac{1}{\infty}=0$. Using, as in \cite{cavalcante}, that for all
Banach spaces $X_{1},...,X_{m}$ and all $m$-linear forms from $X_{1}%
\times\cdots\times X_{m}$ to $\mathbb{K}$ are multiple $\left(  2;\left(
2m\right)  ^{\ast},....,\left(  2m\right)  ^{\ast}\right)  $-summing, we
already know that from the original version of Theorem 2 we can conclude
nothing for multiple $\left(  \mathbf{s};m^{\ast},....,m^{\ast}\right)
$-summing operators because in this case%
\[
\frac{1}{r}-\left\vert \frac{1}{\mathbf{p}}\right\vert +\left\vert \frac
{1}{\mathbf{q}}\right\vert =0.
\]
Let us fix one of the variables (say the first one) and work with $\left(
m-1\right)  $-multilinear forms from $X_{2}\times\cdots\times X_{m}$ to
$\mathbb{K}$. From the Hardy--Littlewood inequality for $\left(  m-1\right)
$-linear forms we know that for any fixed vector $a_{1}\in X_{1}$ we have that
$T(a_{1},\cdot,...,\cdot)$ is $\left(  2;\left(  2(m-1)\right)  ^{\ast
},....,\left(  2(m-1)\right)  ^{\ast}\right)  $-summing. So,
\[
\sup_{\left\Vert x_{j_{1}}^{(1)}\right\Vert \leq1}\left(  \sum_{j_{2}%
,...,j_{m}=1}^{n}\left\vert T\left(  x_{j_{1}}^{(1)},\dots,x_{j_{m}}%
^{(m)}\right)  \right\vert ^{2}\right)  ^{\frac{1}{2}}\leq C\left\Vert
T\right\Vert
{\textstyle\prod\limits_{k=2}^{m}}
\left\Vert \left(  x_{j_{k}}^{(k)}\right)  _{j_{k}=1}^{n}\right\Vert
_{w,\left(  2m-2\right)  ^{\ast}}%
\]
for all continuous $m$-linear forms from $X_{1}\times\cdots\times X_{m}$ to
$\mathbb{K}$ and
\[
C=2^{\frac{(m-1)-1}{2}}.
\]
So let us fix some $a_{1}$ in $X_{1}$ and consider the $\left(  m-1\right)
$-linear form%
\[
R:=T\left(  a_{1},\cdot,...,\cdot\right)  .
\]
We know that $R$ is $\left(  2;\left(  2(m-1)\right)  ^{\ast},....,\left(
2(m-1)\right)  ^{\ast}\right)  $-summing and we want to have a result of the
type $\left(  \mathbf{s};m^{\ast},....,m^{\ast}\right)  $-summing for $R$.
Since for $R$ we have%
\[
\frac{1}{r}-\left\vert \frac{1}{\mathbf{p}}\right\vert +\left\vert \frac
{1}{\mathbf{q}}\right\vert =\frac{1}{2}-\frac{m-1}{\left(  2(m-1)\right)
^{\ast}}+\frac{m-1}{m^{\ast}}>0
\]
we can apply Theorem \ref{rrr} to $R.$ Thus $R$ is $\left(  \mathbf{s}%
;m^{\ast},....,m^{\ast}\right)  $-summing with%
\[
\frac{1}{s_{k}}-\frac{\left(  m-1\right)  -k+1}{m^{\ast}}=\frac{1}{2}%
-\frac{\left(  m-1\right)  -k+1}{\left(  2(m-1)\right)  ^{\ast}}%
\]
i.e., for any $k=2,...,m-1,$ and%
\[
s_{k}=\frac{2m(m-1)}{m+mk-2k}.
\]
We thus conclude that%
\begin{align*}
&  \left(  \sum_{j_{2}=1}^{n}\left(  .....\left(  \sum_{j_{m}=1}^{n}\left\vert
R\left(  x_{j_{2}}^{(2)},\dots,x_{j_{m}}^{(m)}\right)  \right\vert ^{s_{m}%
}\right)  ^{\frac{1}{s_{m}}s_{m-1}}.....\right)  ^{\frac{1}{s_{3}}s_{2}%
}\right)  ^{\frac{1}{s_{2}}}\\
&  \leq2^{\frac{m-2}{2}}\left\Vert R\right\Vert
{\textstyle\prod\limits_{k=2}^{m}}
\left\Vert \left(  x_{j_{k}}^{(k)}\right)  _{j_{k}=1}^{n}\right\Vert
_{w,m^{\ast}}.
\end{align*}
Thus%
\begin{align*}
&  \left(  \sum_{j_{2}=1}^{n}\left(  .....\left(  \sum_{j_{m}=1}^{n}\left\vert
T\left(  a_{1},x_{j_{2}}^{(2)},\dots,x_{j_{m}}^{(m)}\right)  \right\vert
^{s_{m}}\right)  ^{\frac{1}{s_{m}}s_{m-1}}.....\right)  ^{\frac{1}{s_{3}}%
s_{2}}\right)  ^{\frac{1}{s_{2}}}\\
&  \leq2^{\frac{m-2}{2}}\left\Vert T\right\Vert \left\Vert a_{1}\right\Vert
{\textstyle\prod\limits_{k=2}^{m}}
\left\Vert \left(  x_{j_{k}}^{(k)}\right)  _{j_{k}=1}^{n}\right\Vert
_{w,m^{\ast}}%
\end{align*}
for all $a_{1}$ fixed. In other words,%
\begin{align*}
&  \sup_{\left\Vert x_{j_{1}}^{(1)}\right\Vert \leq1}\left(  \sum_{j_{2}%
=1}^{n}\left(  .....\left(  \sum_{j_{m}=1}^{n}\left\vert T\left(  x_{j_{1}%
}^{(1)},\dots,x_{j_{m}}^{(m)}\right)  \right\vert ^{s_{m}}\right)  ^{\frac
{1}{s_{m}}s_{m-1}}.....\right)  ^{\frac{1}{s_{3}}s_{2}}\right)  ^{\frac
{1}{s_{2}}}\\
&  \leq2^{\frac{m-2}{2}}\left\Vert T\right\Vert
{\textstyle\prod\limits_{k=2}^{m}}
\left\Vert \left(  x_{j_{k}}^{(k)}\right)  _{j_{k}=1}^{n}\right\Vert
_{w,m^{\ast}}%
\end{align*}
for
\[
s_{k}=\frac{2m(m-1)}{m+mk-2k}%
\]
and any $m$-linear form from $X_{1}\times\cdots\times X_{m}$ to $\mathbb{K}$.
Since in $\ell_{m}^{n}$ we have $\left\Vert \left(  e_{j}\right)  _{j=1}%
^{n}\right\Vert _{w,m^{\ast}}=1,$ the proof is done.

\section{The proof of the theorem \ref{444}: part 2, on the optimality of the
exponents\label{55}}

Consider $T_{n}:\ell_{m}^{n}\times\cdots\times\ell_{m}^{n}\rightarrow
\mathbb{K}=\mathbb{R}$ or $\mathbb{C}$ given by
\[
T_{n}(x^{(1)},...,x^{(m)})=%
{\textstyle\sum\limits_{j=1}^{n}}
x_{j}^{(1)}..,x_{j}^{(m)}.
\]
Note that%
\[
\left\Vert T_{n}\right\Vert =1
\]
and if there is $C$ such that
\[
\left(  \sum_{j_{1}=1}^{n}\left(  .....\left(  \sum_{j_{m}=1}^{n}\left\vert
T_{n}\left(  e_{j_{1}},\dots,e_{j_{m}}\right)  \right\vert ^{s_{m}}\right)
^{\frac{1}{s_{m}}s_{m-1}}.....\right)  ^{\frac{1}{s_{2}}s_{1}}\right)
^{\frac{1}{s_{1}}}\leq C\left\Vert T_{n}\right\Vert ,
\]
then%
\[
n^{\frac{1}{s_{1}}}\leq C
\]
and since $n$ is arbitrary this means that $s_{1}$ cannot be a positive real
number$.$ So,
\begin{equation}
\sup_{j_{1}}\left(  \sum_{j_{2}=1}^{n}\left(  .....\left(  \sum_{j_{m}=1}%
^{n}\left\vert T\left(  e_{j_{1}},\dots,e_{j_{m}}\right)  \right\vert ^{s_{m}%
}\right)  ^{\frac{1}{s_{m}}s_{m-1}}.....\right)  ^{\frac{1}{s_{3}}s_{2}%
}\right)  ^{\frac{1}{s_{2}}}\leq C\left\Vert T\right\Vert \label{777}%
\end{equation}
for all $m$--linear forms $T:\ell_{m}^{n}\times\cdots\times\ell_{m}%
^{n}\rightarrow\mathbb{K}$ and let us estimate the other exponents. Now
consider%
\[
T_{n-1}(x^{(1)},...,x^{(m)})=x_{1}^{(1)}%
{\textstyle\sum\limits_{j=1}^{n}}
x_{j}^{(2)}...x_{j}^{(m)}.
\]
Note that%
\[
\left\Vert T_{n-1}\right\Vert =n^{1/m}%
\]
Plugging $T_{n-1}$ into (\ref{777}) we have%
\[
n^{1/s_{2}}\leq Cn^{1/m}%
\]
for all $n$, and thus%
\[
s_{2}\geq m
\]
and so on. More precisely, to deal with $s_{3}$ we note that $s_{2}=m$ and
consider%
\[
T_{n-2}(x^{(1)},...,x^{(m)})=x_{1}^{(1)}x_{1}^{(2)}\left(
{\textstyle\sum\limits_{j=1}^{n}}
x_{j}^{(3)}...x_{j}^{(m)}\right)  .
\]
Since,%
\[
\left\Vert T_{n-2}\right\Vert =n^{\frac{2}{m}},
\]
plugging $T_{n-2}$ into (\ref{777}) we have%
\[
n^{1/s_{3}}\leq Cn^{2/m}%
\]
for all $n$, and thus%
\[
s_{3}\geq\frac{m}{2}.
\]
Following this line we show that%
\[
s_{k}\geq\frac{m}{k-1}%
\]
for all $k=3,...,m$.

\section{Back to the origins: Hardy and Littlewood concerns (bilinear case)}

As we mentioned before, the original estimates of Hardy and Littlewood,
\cite{hardy}, dealt with the bilinear case. In \cite{p2} a general formulation
of the bilinear case was presented as follows:

\begin{theorem}
[Pellegrino--Santos--Serrano--Teixeira \cite{p2}]\label{wq} Let $p,q\in
\lbrack2,\infty]$ with $\frac{1}{p}+\frac{1}{q}<1$, and $a,b>0.$ The following
assertions are equivalent:

\begin{itemize}
\item[(a)] There is a constant $C_{p,q,a,b}\geq1$ such that
\begin{equation}
\left(  \sum_{i=1}^{n}\left(  \sum_{j=1}^{n}\left\vert U(e_{i},e_{j}
)\right\vert ^{a}\right)  ^{\frac{1}{a} \cdot b}\right)  ^{\frac{1}{b}}\leq
C_{p,q,a,b}\left\Vert U\right\Vert , \label{811}%
\end{equation}
for all bilinear operators $U \colon\ell_{p}^{n}\times\ell_{q}^{n}
\rightarrow\mathbb{K}$ and all positive integers $n$.

\smallskip

\item[(b)] The exponents $a,b$ satisfy $\left(  a,b\right)  \in\lbrack\frac
{q}{q-1},\infty)\times\lbrack\frac{pq}{pq-p-q},\infty)$ and
\begin{equation}
\frac{1}{a}+\frac{1}{b}\leq\frac{3}{2}-\left(  \frac{1}{p}+\frac{1}{q}\right)
. \label{obey}%
\end{equation}

\end{itemize}
\end{theorem}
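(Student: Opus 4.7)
I would split the equivalence and handle the necessity direction (a)$\Rightarrow$(b) first by extremal bilinear forms, then establish sufficiency (b)$\Rightarrow$(a) by anchoring the inequality at the two extremal corners of the admissible region and propagating it via Minkowski and the extended inclusion theorem.

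\emph{Necessity.} The three constraints in (b) are three independent lower bounds that I would witness by specific bilinear forms, reading off the asymptotic dependence on $n$ from (\ref{811}). First, the full diagonal $U(x,y)=\sum_{j=1}^{n}x_{j}y_{j}$ has $\|U\|=n^{1-1/p-1/q}$ (it is the identity $\ell_{p}^{n}\to\ell_{q^{\ast}}^{n}$) and $U(e_{i},e_{j})=\delta_{ij}$, so plugging it into (\ref{811}) gives $n^{1/b}\leq C\,n^{1-1/p-1/q}$ and hence $b\geq\frac{pq}{pq-p-q}$. Second, the partial diagonal $U(x,y)=x_{1}\sum_{j}y_{j}$ has $U(e_{i},e_{j})=\delta_{i,1}$, left-hand side equal to $n^{1/a}$, and norm $n^{1-1/q}$, yielding the box constraint $a\geq q^{\ast}$. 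Third, the hyperplane inequality $\frac{1}{a}+\frac{1}{b}\leq\frac{3}{2}-\frac{1}{p}-\frac{1}{q}$ requires a probabilistic construction: take $U_{\varepsilon}(x,y)=\sum_{i,j}\varepsilon_{ij}x_{i}y_{j}$ for a generic sign matrix $\varepsilon$ and use a Khintchine/Kahane/Chevet-type bound to average the operator norm down to $n^{3/2-1/p-1/q}$, while the left-hand side of (\ref{811}) equals $n^{1/a+1/b}$ for every sign pattern.

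\emph{Sufficiency.} I would anchor the argument at the two extremal corners of the admissible region, namely $(a,b)=(q^{\ast},\tfrac{2p}{p-2})$ and $(a,b)=(\tfrac{2q}{q-2},p^{\ast})$, where both the box constraint and the hyperplane constraint are simultaneously tight. At either corner, (\ref{811}) follows by fixing one variable to obtain a linear absolutely $(q^{\ast};q^{\ast})$-summing estimate on $\ell_{p}^{n}$ (or its symmetric counterpart) and then applying the extended Inclusion Theorem \ref{990} to the remaining variable; the extension is essential here because at the corner one is precisely at the borderline equality $\frac{1}{r}-\left\vert\frac{1}{\mathbf{p}}\right\vert+\left\vert\frac{1}{\mathbf{q}}\right\vert=0$ that Theorem \ref{rrr} does not cover. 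Every other admissible pair $(a,b)$ is then reached from one of the two corners by monotonic embeddings $\ell_{a}\hookrightarrow\ell_{a'}$ (when both exponents grow) and by Minkowski's inequality (\ref{mm}) (when one moves along the hyperplane by swapping the roles of the inner and outer exponents), keeping the constant bounded at each step.

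\emph{Main obstacle.} The delicate part is the sharp $\frac{3}{2}$ in the necessity hyperplane bound: a deterministic rank-one example such as $U(x,y)=(\sum_{i}x_{i})(\sum_{j}y_{j})$ only yields $\frac{1}{a}+\frac{1}{b}\leq 2-\frac{1}{p}-\frac{1}{q}$, and the improvement to $\frac{3}{2}$ requires both a random sign construction and a matching probabilistic upper bound on $\mathbb{E}_{\varepsilon}\|U_{\varepsilon}\|$. Once that sharp example is in hand, the remainder of the proof is essentially bookkeeping with Hölder, Minkowski and the extended inclusion theorem.
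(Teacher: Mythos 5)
First, a point of scope: Theorem~\ref{wq} is quoted in this paper from Pellegrino--Santos--Serrano--Teixeira \cite{p2} and is not proved here, so there is no ``paper's own proof'' to compare against. I will instead assess your proposal on its own terms.

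Your necessity direction is sound and matches the standard line of argument. The diagonal form $U(x,y)=\sum_j x_j y_j$ is, as the identity $\ell_p^n\to\ell_{q^\ast}^n$, of norm $n^{1-1/p-1/q}$, giving $b\geq\frac{pq}{pq-p-q}$; the partial diagonal $U(x,y)=x_1\sum_j y_j$ has norm $n^{1-1/q}$ and gives $a\geq q^\ast$; and a Kahane--Salem--Zygmund sign matrix of norm $\lesssim n^{3/2-1/p-1/q}$ gives the hyperplane constraint. All three computations are correct.

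The sufficiency direction has a concrete error. You propose to anchor at two ``corners'' where both the box constraint and the hyperplane constraint are tight, naming them $(a,b)=\bigl(q^\ast,\tfrac{2p}{p-2}\bigr)$ and $(a,b)=\bigl(\tfrac{2q}{q-2},p^\ast\bigr)$. The first is indeed a vertex of the admissible region (one checks $\tfrac{2p}{p-2}\geq\tfrac{pq}{pq-p-q}$ reduces to $q\geq 2$). The second is \emph{not admissible}: the requirement $b\geq\frac{pq}{pq-p-q}$ fails for $b=p^\ast=\frac{p}{p-1}$, since
\[
\frac{p}{p-1}\geq\frac{pq}{pq-p-q}\iff p(pq-p-q)\geq pq(p-1)\iff -p^2\geq 0,
\]
which is false for every $p$. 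So your second anchor lies outside the admissible region and cannot be established; indeed by your own necessity argument the inequality fails there. The actual second vertex, obtained by intersecting the hyperplane $\frac{1}{a}+\frac{1}{b}=\frac{3}{2}-\frac{1}{p}-\frac{1}{q}$ with the line $\frac{1}{b}=1-\frac{1}{p}-\frac{1}{q}$, is $(a,b)=\bigl(2,\tfrac{pq}{pq-p-q}\bigr)$. Beyond the misplacement of the corner, the statement that ``every other admissible pair is reached by monotonic embeddings and Minkowski'' is too vague: monotone coordinatewise domination handles only the part of the pentagon lying above-and-right of a single corner in the $(1/a,1/b)$ plane, and to pass between the two corners along the hyperplane one needs a genuine interpolation step (a log-convexity of the best constant in $(1/a,1/b)$, or the inclusion/regularity mechanism of \cite{p2}), not Minkowski alone. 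As it stands the sufficiency half is not established.

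\end{document}
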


When $a,b\in(0,\infty)$ satisfy the item $(b)$ of the previous theorem we call
$(a, b)$ \textit{admissible exponents.}

The above result is essentially final for the bilinear case, but note that the
critical case
\[
\frac{1}{p}+\frac{1}{q}=1
\]
is not considered. This is natural, because this critical case was overlooked
even by Hardy and Littlewood. As a consequence of Theorem \ref{444}, we have:

\begin{corollary}
Let $a,b\in(0,\infty].$ The following assertions are equivalent:

\begin{itemize}
\item[(a)] For all bilinear forms $U\colon\ell_{2}^{n}\times\ell_{2}%
^{n}\rightarrow\mathbb{K}$ and all positive integers $n$ we have
\[
\left(  \sum_{i=1}^{n}\left(  \sum_{j=1}^{n}\left\vert U(e_{i},e_{j}%
)\right\vert ^{a}\right)  ^{\frac{1}{a}\cdot b}\right)  ^{\frac{1}{b}}%
\leq\left\Vert U\right\Vert .
\]

\smallskip

\item[(b)] The exponents $a,b$ satisfy%
\[
b=\infty\text{ and }a\geq2.
\]

\end{itemize}
\end{corollary}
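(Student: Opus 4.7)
The plan is to treat the two implications separately: $(b)\Rightarrow(a)$ will be deduced from the $m=2$ instance of Theorem~\ref{444}, and $(a)\Rightarrow(b)$ will be obtained from the two explicit test forms used in Section~\ref{55}.

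For $(b)\Rightarrow(a)$, the first step is to specialize Theorem~\ref{444} to $m=2$. In that case $s_2=2$, and the Section~3 proof actually delivers the constant $2^{(m-2)/2}=1$ (the bound $2^{(m-1)/2}$ in the theorem statement is loose for $m=2$). Equivalently, for fixed~$i$ the linear form $U(e_i,\cdot)$ on $\ell_{2}^{n}$ has Riesz-representation norm $(\sum_{j}|U(e_i,e_j)|^{2})^{1/2}$, which is bounded by $\|U\|\cdot\|e_i\|_{2}=\|U\|$ via the self-duality of $\ell_{2}^{n}$. Once this is available, the range $a\geq 2$ is handled by the monotonicity of $\ell_{p}$-norms in finite dimensions, $(\sum_{j}|U(e_i,e_j)|^{a})^{1/a}\leq(\sum_{j}|U(e_i,e_j)|^{2})^{1/2}$, and the boundary case $a=\infty$ is the trivial coordinate bound $|U(e_i,e_j)|\leq\|U\|$. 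In every case the sup over~$i$ stays bounded by~$\|U\|$, giving (a) with constant~$1$.

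For $(a)\Rightarrow(b)$, I would mimic the counterexamples of Section~\ref{55}. To rule out any finite~$b$, I test against the diagonal form $T_{n}(x,y)=\sum_{j=1}^{n}x_{j}y_{j}$, which has norm~$1$ on $\ell_{2}^{n}\times\ell_{2}^{n}$ by Cauchy--Schwarz while $T_{n}(e_i,e_j)=\delta_{ij}$ makes the left-hand side of~(a) equal to~$n^{1/b}$; letting $n\to\infty$ then forces $b=\infty$. With $b=\infty$ in hand, I next test against $T_{n-1}(x,y)=x_{1}\sum_{j=1}^{n}y_{j}$, whose norm on $\ell_{2}^{n}\times\ell_{2}^{n}$ equals $\sqrt{n}$ (again by Cauchy--Schwarz, attained at $x=e_1$ and $y=(1,\ldots,1)/\sqrt{n}$), while the left-hand side of (a) reduces to $n^{1/a}$; the requirement $n^{1/a}\leq n^{1/2}$ for every~$n$ yields $a\geq 2$.

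The only subtlety I foresee is pinning down the sharp constant~$1$: this is a stronger statement than what Theorem~\ref{444} gives after a literal reading of its stated constant, and either one extracts $2^{(m-2)/2}$ from the Section~3 proof or, more elementarily, invokes Parseval on $\ell_{2}^{n}$. Past that point the argument is just monotonicity plus two routine norm computations on diagonal bilinear test forms, and I do not expect any further obstacle.
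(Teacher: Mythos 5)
Your proof is correct and essentially follows the route the paper implicitly relies on: the $m=2$ case of Theorem~\ref{444} (or, more elementarily, Riesz self-duality of $\ell_2^n$ plus $\ell_p$-monotonicity) for $(b)\Rightarrow(a)$, and the same diagonal and degenerate test forms used in Section~\ref{55} for $(a)\Rightarrow(b)$. You also rightly flag that the constant $2^{(m-1)/2}$ in the statement of Theorem~\ref{444} does not match the $2^{(m-2)/2}$ actually produced by the Section~3 proof (and appearing in the abstract), and that only the latter yields the constant~$1$ needed here at $m=2$.
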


As usual, above, the sum $\left(  \sum_{i=1}^{n}\left(  \bullet\right)
^{b}\right)  ^{1/b}$ is replaced by the $\sup$ norm when $b=\infty.$

The next result shows what happens with the dependence on $n$ when we deal
with non admissible exponents in the critical case. The non critical case was
already investigated in \cite{p2}.

\begin{theorem}
Let $a,b\in(0,\infty].$ If $(a,b)$ are non-admissible then
\[
\left(  \sum_{i=1}^{n_{1}}\left(  \sum_{j=1}^{n_{2}}\left\vert U(e_{i}%
,e_{j})\right\vert ^{a}\right)  ^{\frac{1}{a}\cdot b}\right)  ^{\frac{1}{b}%
}\leq n_{1}^{\frac{1}{b}}\cdot n_{2}^{\frac{1}{a}-\frac{1}{2}}\left\Vert
U\right\Vert ,
\]
for all bilinear forms $U\colon\ell_{2}^{n_{1}}\times\ell_{2}^{n_{2}%
}\rightarrow\mathbb{K}$ and all positive integers $n$. Furthermore, the
exponents $s_{2}:=\frac{1}{a}-\frac{1}{2}$ is sharp and when $s_{2}:=\frac
{1}{a}-\frac{1}{2}$ the exponent $s_{1}=\frac{1}{b}$ is sharp.
\end{theorem}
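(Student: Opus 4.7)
The plan is to deduce the upper bound from the critical bilinear Hardy--Littlewood inequality (Theorem~\ref{444} specialised to $m=2$) combined with two elementary $\ell_{p}$-inclusion estimates, and then to establish sharpness by testing the inequality against two explicit families of bilinear forms.

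\textbf{Upper bound.} Theorem~\ref{444} with $m=2$ supplies the base estimate
\[
\sup_{1\le i\le n_{1}}\Bigl(\sum_{j=1}^{n_{2}}|U(e_{i},e_{j})|^{2}\Bigr)^{1/2}\le\|U\|
\]
for every bilinear $U\colon\ell_{2}^{n_{1}}\times\ell_{2}^{n_{2}}\to\mathbb{K}$ (if necessary extend $U$ by zero to the common dimension $\max(n_{1},n_{2})$). The power-mean inequality, equivalently the inclusion $\ell_{2}^{n_{2}}\hookrightarrow\ell_{a}^{n_{2}}$, then yields
\[
\Bigl(\sum_{j=1}^{n_{2}}|U(e_{i},e_{j})|^{a}\Bigr)^{1/a}\le n_{2}^{\frac{1}{a}-\frac{1}{2}}\Bigl(\sum_{j=1}^{n_{2}}|U(e_{i},e_{j})|^{2}\Bigr)^{1/2}\le n_{2}^{\frac{1}{a}-\frac{1}{2}}\|U\|
\]
uniformly in $i$. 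Estimating the outer $\ell^{b}$-sum over at most $n_{1}$ terms by $n_{1}^{1/b}$ times the supremum in $i$ produces the claimed factor $n_{1}^{1/b}n_{2}^{1/a-1/2}\|U\|$.

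\textbf{Sharpness of $s_{2}=1/a-1/2$.} I would test against the rank-one form
\[
V(x,y):=x_{1}\cdot n_{2}^{-1/2}\sum_{j=1}^{n_{2}}y_{j}.
\]
A Cauchy--Schwarz computation gives $\|V\|=1$; the only non-zero entries are $V(e_{1},e_{j})=n_{2}^{-1/2}$, so the left-hand side equals exactly $n_{2}^{1/a-1/2}$. Letting $n_{2}\to\infty$ with $n_{1}$ fixed forces $s_{2}\ge 1/a-1/2$.

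\textbf{Sharpness of $s_{1}=1/b$.} With $s_{2}$ pinned at $1/a-1/2$, I would use the trace-type form $W(x,y):=\sum_{i=1}^{n_{1}}x_{i}y_{i}$ on $\ell_{2}^{n_{1}}\times\ell_{2}^{n_{2}}$ with $n_{2}\ge n_{1}$: then $\|W\|=1$ and $(\sum_{j}|W(e_{i},e_{j})|^{a})^{1/a}=1$ for $1\le i\le n_{1}$, whence the left-hand side equals $n_{1}^{1/b}$, forcing $s_{1}\ge 1/b$.

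\textbf{Main obstacle.} The upper bound is essentially bookkeeping once the base estimate is in place. The real work sits in the sharpness step: each of the extremal forms $V$ and $W$ saturates only one of the two dimensions, so decoupling $s_{1}$ from $s_{2}$ requires either analysing the two asymptotic regimes separately ($n_{1}$ fixed with $n_{2}\to\infty$ to pin $s_{2}$, then $n_{1}\to\infty$ with $n_{2}\ge n_{1}$ to pin $s_{1}$) or constructing a tensorised bilinear form combining $V$ and $W$ that saturates both exponents simultaneously; verifying that this gives \emph{exactly} $(s_{1},s_{2})=(1/b,\,1/a-1/2)$ is the delicate point.
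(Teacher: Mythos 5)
Your upper bound and the $s_{2}$-sharpness are sound and essentially match the paper: the paper writes the factorization via H\"older's inequality for mixed sums with $\tfrac{1}{a}=\tfrac{1}{2}+\tfrac{1}{x}$, $\tfrac{1}{b}=\tfrac{1}{\infty}+\tfrac{1}{y}$, which is your power-mean/inclusion argument in different notation, and the paper's extremal form $T_{0}(x,y)=x_{1}\sum_{j}y_{j}$ is your $V$ up to normalization. (Both proofs, incidentally, silently assume $a\le 2$, since otherwise $1/a-1/2<0$ and the exponent $x$ with $1/a=1/2+1/x$ does not exist; but that flaw is shared.)

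The genuine gap is in the $s_{1}$-sharpness. Your trace form $W(x,y)=\sum_{i=1}^{n_{1}}x_{i}y_{i}$ does \emph{not} pin $t_{1}\ge 1/b$ once $t_{2}=1/a-1/2$ is fixed. For $W$ one has $\|W\|=1$ and the left-hand side equal to $n_{1}^{1/b}$, so the putative bound $n_{1}^{1/b}\le C\,n_{1}^{t_{1}}n_{2}^{1/a-1/2}$ with the smallest allowed $n_{2}=n_{1}$ only yields
\[
t_{1}\ \ge\ \frac{1}{b}-\Bigl(\frac{1}{a}-\frac{1}{2}\Bigr),
\]
which is strictly smaller than $1/b$ whenever $a<2$. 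The trace form never makes the inner $a$-sum grow, so the $n_{2}^{1/a-1/2}$ factor on the right-hand side is wasted slack and your conclusion ``forcing $s_{1}\ge 1/b$'' does not follow. The paper closes this gap by invoking the Kahane--Salem--Zygmund inequality to produce a bilinear form $A\colon\ell_{2}^{n}\times\ell_{2}^{n}\to\mathbb{K}$ with unimodular coefficients and $\|A\|\le Cn^{1/2}$; for $A$ the left-hand side is exactly $n^{1/a+1/b}$, so the bound reads $n^{1/a+1/b}\le C\,n^{t_{1}}n^{1/a-1/2}n^{1/2}$, giving $t_{1}\ge 1/b$ cleanly. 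You correctly flagged in your ``main obstacle'' paragraph that a form saturating both dimensions simultaneously might be needed; KSZ is precisely the tool that supplies one, and without it (or some comparable random/number-theoretic construction) the $s_{1}$-sharpness does not go through.
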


\begin{proof}
Let $x,y>0$ be such that $\frac{1}{a}=\frac{1}{2}+\frac{1}{x}$ and $\frac
{1}{b}=\frac{1}{\infty}+\frac{1}{y}$. By the H\"{o}lder's inequality for mixed
sums, we have
\begin{align*}
\left(  \sum_{i=1}^{n_{1}}\left(  \sum_{j=1}^{n_{2}}\left\vert U(e_{i}%
,e_{j})\right\vert ^{a}\right)  ^{\frac{1}{a}\cdot b}\right)  ^{\frac{1}{b}}
&  \leq\sup_{i}\left(  \sum_{j=1}^{n_{2}}\left\vert U(e_{i},e_{j})\right\vert
^{2}\right)  ^{\frac{1}{2}}\cdot\left(  \sum_{i=1}^{n_{1}}\left(  \sum
_{j=1}^{n_{2}}\left\vert 1\right\vert ^{x}\right)  ^{\frac{1}{x}\cdot
y}\right)  ^{\frac{1}{y}}\\
&  \leq n_{1}^{\frac{1}{b}}\cdot n_{2}^{\frac{1}{a}-\frac{1}{2}}\left\Vert
U\right\Vert .
\end{align*}
Now let us prove that the exponents are sharp. Define $T_{0}:\ell_{2}^{n_{1}%
}\times\ell_{2}^{n_{2}}\longrightarrow\mathbb{K}$ given by $T_{0}%
(x,y)=x_{1}\sum_{j=1}^{n_{2}}y_{j}$. We have $\Vert T_{0}\Vert=n_{2}^{\frac
{1}{2}}.$ Thus, exist $t_{1},t_{2}\geq0$ such that
\[
\left(  \sum_{i=1}^{n_{1}}\left(  \sum_{j=1}^{n_{2}}\left\vert T_{0}%
(e_{i},e_{j})\right\vert ^{a}\right)  ^{\frac{1}{a}\cdot b}\right)  ^{\frac
{1}{b}}\leq n_{1}^{t_{1}}\cdot n_{2}^{t_{2}}\left\Vert T_{0}\right\Vert
\]
and thus
\[
\left(  \sum_{j=1}^{n_{2}}\left\vert T_{0}(e_{1},e_{j})\right\vert
^{a}\right)  ^{\frac{1}{a}}\leq n_{1}^{t_{1}}\cdot n_{2}^{t_{2}}\cdot
n_{2}^{\frac{1}{2}}.
\]
Thus, for all $n_{1},n_{2}$ we have
\[
n_{2}^{\frac{1}{a}}\leq n_{1}^{t_{1}}\cdot n_{2}^{t_{2}+\frac{1}{2}}%
\]
and we conclude that
\[
t_{2}\geq\frac{1}{a}-\frac{1}{2}.
\]
Now let us fix the optimal exponent $t_{2}=\frac{1}{a}-\frac{1}{2}$.
Considering the bilinear map $A:\ell_{2}^{n}\times\ell_{2}^{n}\longrightarrow
\mathbb{K}$ given by the Kahane--Salem--Zygmund inequality (see \cite{ab}) we
have
\begin{align*}
n^{\frac{1}{a}+\frac{1}{b}} &  =\left(  \sum_{i=1}^{n}\left(  \sum_{j=1}%
^{n}\left\vert A(e_{i},e_{j})\right\vert ^{a}\right)  ^{\frac{1}{a}\cdot
b}\right)  ^{\frac{1}{b}}\leq Cn^{\frac{1}{a}-\frac{1}{2}}n^{t_{1}}\left\Vert
A\right\Vert \\
&  \leq Cn^{\frac{1}{a}-\frac{1}{2}+t_{1}}n^{\frac{1}{2}}.
\end{align*}
Since $n$ is arbitrary we conclude that $t_{1}=\frac{1}{b}$ is optimal.
\end{proof}

\textbf{Acknowledgment} This paper is part of the author's PhD thesis written
under supervision of Professor Joedson Santos.

\end{document}